\renewcommand{\eqref}[1]{\hyperref[#1]{(\ref{#1})}}
\newlist{enumlist}{enumerate}{2}
\setlist[enumlist,1]{labelindent=0cm,label=(\roman*),ref=(\roman*),labelwidth=4.5ex,labelsep=1ex,leftmargin=5.5ex,align=right,topsep=0.5ex,itemsep=1ex,parsep=1ex}
\setlist[enumlist,2]{labelindent=0cm,label=\theenumlisti.\arabic*.,ref=\arabic*,labelwidth=5ex,labelsep=0.5ex,leftmargin=5.5ex,align=left,topsep=0.5ex,itemsep=1ex,parsep=1ex}
\newlist{itemlist}{itemize}{1}
\setlist[itemlist]{labelindent=0cm,label=$\bullet$,labelwidth=2.5ex,labelsep=0.5ex,leftmargin=3ex,align=left,topsep=0.5ex,itemsep=1ex,parsep=1ex}
\numberwithin{equation}{section}
\theoremstyle{definition}\newtheorem{definitiona}{Definition}[section]
\newtheorem{remarka}[definitiona]{Remark}
\newtheorem{examplea}[definitiona]{Example}}
\newtheorem{propositiona}[definitiona]{Proposition}
\newtheorem{lemmaa}[definitiona]{Lemma}
\newtheorem{theorema}[definitiona]{Theorem}
\newtheorem{corollarya}[definitiona]{Corollary}
\newtheorem{letterthma}{Theorem}
\renewcommand{\theletterthma}{\Alph{letterthma}}
\theoremstyle{definition}
\newtheorem{letterdefa}[letterthma]{Definition}
\newtheorem{letterremarka}[letterthma]{Remark}}
\newenvironment{remark}[1][]{\begin{remarka}[#1]\setlist[enumlist,1]{label=(\roman*),ref=\theremarka(\roman*)}}{\end{remarka}}
\newenvironment{proposition}[1][]{\begin{propositiona}[#1]\setlist[enumlist,1]{label=(\roman*),ref=\thepropositiona(\roman*)}}{\end{propositiona}}
\newenvironment{lemma}[1][]{\begin{lemmaa}[#1]\setlist[enumlist,1]{label=(\roman*),ref=\thelemmaa(\roman*)}}{\end{lemmaa}}
\newenvironment{theorem}[1][]{\begin{theorema}[#1]\setlist[enumlist,1]{label=(\roman*),ref=\thetheorema(\roman*)}}{\end{theorema}}
\newenvironment{letterthm}[1][]{\begin{letterthma}[#1]\setlist[enumlist,1]{label=(\roman*),ref=\theletterthma.(\roman*)}}{\end{letterthma}}
\newenvironment{letterdef}[1][]{\begin{letterdefa}[#1]\setlist[enumlist,1]{label=(\roman*),ref=\theletterdefa.(\roman*)}}{\end{letterdefa}}
\newenvironment{letterremark}[1][]{\begin{letterremarka}[#1]\setlist[enumlist,1]{label=(\roman*),ref=\theletterremarka.(\roman*)}}{\end{letterremarka}}
\newcommand{\C}{\mathbb{C}}
\newcommand{\N}{\mathbb{N}}
\newcommand{\R}{\mathbb{R}}
\newcommand{\T}{\mathbb{T}}
\newcommand{\Z}{\mathbb{Z}}
\newcommand{\cG}{\mathcal{G}}
\newcommand{\cF}{\mathcal{F}}
\newcommand{\cO}{\mathcal{O}}
\newcommand{\cU}{\mathcal{U}}
\newcommand{\al}{\alpha}
\newcommand{\be}{\beta}
\newcommand{\eps}{\varepsilon}
\newcommand{\vphi}{\varphi}
\newcommand{\si}{\sigma}
\newcommand{\Ad}{\operatorname{Ad}}
\newcommand{\Aut}{\operatorname{Aut}}
\newcommand{\Inn}{\operatorname{Inn}}
\newcommand{\Out}{\operatorname{Out}}
\newcommand{\altil}{\widetilde{\alpha}}
\newcommand{\betil}{\widetilde{\beta}}
\newcommand{\alhat}{\widehat{\alpha}}
\newcommand{\behat}{\widehat{\beta}}
\newcommand{\Lambdahat}{\widehat{\Lambda}}
\newcommand{\ot}{\otimes}
\newcommand{\id}{\mathord{\text{\rm id}}}
\newcommand{\actson}{\curvearrowright}
\newcommand{\Nbar}{\overline{\N}}
\newcommand{\Autmp}{\operatorname{Aut}_{\text{\rm mp}}}
\newcommand{\Exp}{\operatorname{Exp}}
\begin{document}

\begin{center}
{\boldmath\LARGE\bf Every locally compact group is the outer\vspace{0.5ex}\\
automorphism group of a II$_1$ factor}

\vspace{1ex}

{\sc by Stefaan Vaes\footnote{KU~Leuven, Department of Mathematics, Leuven (Belgium), stefaan.vaes@kuleuven.be\\ Supported by FWO research project G090420N of the Research Foundation Flanders and by Methusalem grant METH/21/03 –- long term structural funding of the Flemish Government.}}
\end{center}

\begin{abstract}\noindent
We prove that every locally compact second countable group $G$ arises as the outer automorphism group $\Out M$ of a II$_1$ factor, which was so far only known for totally disconnected groups, compact groups and a few isolated examples. We obtain this result by proving that every locally compact second countable group is a \emph{centralizer group}, a class of Polish groups that arise naturally in ergodic theory and that may all be realized as $\Out M$.
\end{abstract}

\section{Introduction and main results}

The \emph{outer automorphism group} $\Out M$ of a II$_1$ factor $M$ is the quotient of the group $\Aut M$ of all $*$-automorphisms of $M$ by the normal subgroup $\Inn M$ of inner automorphisms, of the form $\Ad u$ with $u \in \cU(M)$. In \cite{IPP05} answering a question of Connes from the 1970s, it was proven that there exist II$_1$ factors $M$ with trivial outer automorphism group. This provided at the same time the first complete computation of the outer automorphism group of any II$_1$ factor.

A II$_1$ factor $M$ is called \emph{full} if $\Inn M$ is a closed subgroup of $\Aut M$. This is equivalent with $M$ not having property Gamma in the sense of Murray and von Neumann. If $M$ is full and has separable predual, $\Out M$ is a Polish group. It is a natural and by now well-studied question which Polish groups arise as the outer automorphism group of a full II$_1$ factor with separable predual. The following families of Polish groups could be realized in this way: compact abelian groups in \cite{IPP05}, countable discrete groups in \cite{PV06,Vae07}, arbitrary compact groups in \cite{FV07} and several families of non locally compact groups in \cite{PV21}, including all closed subgroups of the group of all permutations of a countably infinite set.

All this however left open the question if all locally compact second countable groups, in particular Lie groups, can be realized as $\Out M$. We solve this problem here and the following is our main result.

\begin{letterthm}\label{thm.main-every-lc}
Every locally compact second countable group $G$ arises as the outer automorphism group $\Out M$ of a full II$_1$ factor $M$ with separable predual.
\end{letterthm}

Denote by $(X,\mu)$ the standard nonatomic probability space. We consider the Polish group $\Aut(X,\mu)$ of all \emph{nonsingular} automorphisms of $(X,\mu)$, identifying two automorphisms that coincide almost everywhere. The \emph{measure preserving} automorphisms form a closed subgroup $\Autmp(X,\mu)$. In \cite[Theorem E]{Dep10}, it was proven that for every countable subgroup $\Lambda < \Autmp(X,\mu)$, the \emph{centralizer} $C(\Lambda)$ of $\Lambda$ inside $\Autmp(X,\mu)$ is a Polish group that arises as the outer automorphism group $\Out M$ of a full II$_1$ factor $M$ with separable predual. A more direct proof of this result was given in \cite[Proposition 8.5]{PV21}.

To prove Theorem \ref{thm.main-every-lc}, it thus suffices to realize every locally compact second countable group as a centralizer inside $\Autmp(X,\mu)$. Note here that a subgroup $G < \Autmp(X,\mu)$ is of the form $C(\Lambda)$ for a subgroup $\Lambda < \Autmp(X,\mu)$ if and only if $G$ equals its double centralizer $C(C(G))$.

\begin{letterdef}\label{def.centralizer-group}
We say that a Polish group $G$ is a \emph{centralizer group} if there exists an ergodic probability measure preserving action $\Lambda \actson (X,\mu)$ of a countable group $\Lambda$ on a standard probability space $(X,\mu)$ such that $G$ is isomorphic with the centralizer of $\Lambda$ inside $\Autmp(X,\mu)$.
\end{letterdef}

So as explained above, by \cite{Dep10,PV21} for every centralizer group $G$, there exists a full II$_1$ factor $M$ with separable predual such that $G \cong \Out M$. Since every locally compact group $G$ is the centralizer of $G$ acting on itself (by transformations that are not probability measure preserving), Theorem \ref{thm.main-every-lc} is thus a consequence of the following result, which also shows that the class of centralizer groups is quite natural.

\begin{letterthm}\label{thm.main-centralizer-group}
The following Polish groups are centralizer groups.
\begin{enumlist}
\item\label{thm.main-centralizer-group.i} The centralizer of any subgroup of the Polish group $\Autmp(X,\mu)$ of measure preserving automorphisms of a \emph{possibly infinite} $\si$-finite standard measure space $(X,\mu)$.
\item\label{thm.main-centralizer-group.ii} The centralizer of any subgroup of the Polish group $\Aut(X,\mu)$ of \emph{nonsingular} automorphisms of a standard probability space $(X,\mu)$.
\end{enumlist}
\end{letterthm}

To prove the first statement, we make use of \emph{Poisson suspensions}, while we deduce the second statement from the first by using the \emph{Maharam extension}. In Sections \ref{sec.permanence} and \ref{sec.closed-groups-of-permutations}, we prove a few permanence properties for the class of centralizer groups, we reprove that every closed subgroup of the group $S_\infty$ of all permutations of a countably infinite set is a centralizer group (cf.\ \cite[Corollary 8.2]{PV21}) and that $\Aut K$ is a centralizer group for all compact second countable groups $K$.

\begin{letterremark}
To summarize, we then get the following list of centralizer groups, which can all be realized as the outer automorphism group of a full II$_1$ factors with separable predual.
\begin{enumlist}
\item Locally compact second countable groups and their direct integrals (Theorem \ref{thm.main-every-lc} and Proposition \ref{prop.direct-integral}).
\item Closed subgroups of $S_\infty$, the group of all permutations of a countably infinite set (\cite[Corollary 8.2]{PV21} and Theorem \ref{thm.closed-subgroup-Sinfty}).
\item The automorphism group $\Aut K$ of a compact second countable group $K$ (Proposition \ref{prop.Aut-K}).
\item The unitary group $\cU(N)$ of a von Neumann algebra $N$ with separable predual (\cite[Corollary 8.2]{PV21}).
\item A direct product of a countable family of centralizer groups (Proposition \ref{prop.direct-product}).
\end{enumlist}
\end{letterremark}

\section{Preliminaries}

We start by recalling the concept of \emph{Poisson suspensions} and refer to \cite{JRD20} for details and further references. Let $(Y,\eta)$ be a standard $\si$-finite measure space. A measure on $Y$ of the form $\sum_{i \in I} \delta_{y_i}$, where $(y_i)_{i \in I}$ is a countable set of points in $Y$, is called a \emph{counting measure}. Write $\Nbar = \N \cup \{0,+\infty\}$. The set $X$ of all counting measures $\nu$ on $Y$ has a unique standard Borel structure such that the maps
$$N_A : X \to \Nbar : \nu \mapsto \nu(A)$$
are Borel for every Borel set $A \subset Y$. There is a unique probability measure $\mu$ on $X$ such that for every Borel set $A \subset Y$ with $0 < \eta(A) < +\infty$, the random variable $N_A$ has a Poisson distribution with intensity $\eta(A)$, i.e.
$$\mu\bigl(N_A^{-1}(\{k\})\bigr) = \exp(-\eta(A)) \, \frac{\eta(A)^k}{k!} \quad\text{for all $k \in \N \cup \{0\}$,}$$
and such that the random variables $N_{A_1},\ldots,N_{A_n}$ are mutually independent for all disjoint Borel sets $A_i \subset Y$ with $0 < \eta(A_i) < +\infty$. Also note that $N_A + N_B = N_{A \cup B}$ when $A,B \subset Y$ are disjoint Borel sets.

There then is a unique isometric real linear map
\begin{equation}\label{eq.map-theta}
\theta : L^2_\R(Y,\eta) \to L^2_\R(X,\mu) : \theta(1_A) = N_A - \eta(A) 1
\end{equation}
for all Borel sets $A \subset Y$ with $\eta(A) < +\infty$. One checks that
\begin{equation}\label{eq.char-function-theta}
E_\mu(\exp(it \theta(\xi))) = \exp\Bigl(\int_Y (\exp(i t \xi) - i t \xi - 1) \, d\eta \Bigr) \quad\text{for all $\xi \in L^2_\R(Y,\eta)$,}
\end{equation}
where the right hand side is well-defined because $|\exp(ir) - ir - 1| \leq r^2$ for all $r \in \R$.

Denote by $L^2_\R(Y,\eta)^{\ot_{\text{\rm symm}}^{n}}$ the closed subspace of symmetric vectors in $L^2_\R(Y,\eta)^{\ot^n}$. Consider the symmetric Fock space
$$\cF_s(L^2_\R(Y,\eta)) = \R 1 \oplus \bigoplus_{n=1}^\infty L^2_\R(Y,\eta)^{\ot_{\text{\rm symm}}^{n}}$$
and define for every $\xi \in L^2_\R(Y,\eta)$ the vector $\Exp(\xi) \in \cF_s(L^2_\R(Y,\eta))$ by
$$\Exp(\xi) = 1 \oplus \bigoplus_{n=1}^\infty \frac{1}{\sqrt{n!}} \xi^{\ot n} \; .$$
Note that $\langle \Exp(\xi),\Exp(\zeta)\rangle = \exp(\langle \xi,\zeta\rangle)$.

Then $\theta : L^2_\R(Y,\eta) \to L^2_\R(X,\mu)$ uniquely extends to an isometric, bijective, real linear map
\begin{equation}\label{eq.Fock-space-decomposition}
\Theta: \cF_s(L^2_\R(Y,\eta)) \to L^2_\R(X,\mu) : \Theta(\Exp(\exp(\xi) - 1)) = E_\mu(\exp(\theta(\xi)))^{-1} \, \exp(\theta(\xi))
\end{equation}
for all simple functions $\xi \in L^2_\R(Y,\eta)$, i.e.\ finite linear combinations of $1_A$ with $A \subset Y$ Borel and $\eta(A) < +\infty$.

To every $\al \in \Autmp(Y,\eta)$ corresponds the Poisson suspension $\alhat \in \Autmp(X,\mu)$ defined by $\alhat(\nu) = \al_*(\nu)$ for every counting measure $\nu$ on $Y$. Every measure preserving automorphism $\be$ of a $\sigma$-finite standard measure space $(Z,\zeta)$, also gives rise to the canonical orthogonal transformation $U_\be$ of $L^2_\R(Z,\zeta)$. We then get that
\begin{equation}\label{eq.suspension-respects-Fock}
U_{\alhat} \circ \Theta = \Theta \circ \Bigl( \id \oplus \bigoplus_{n=1}^\infty U_\al^{\ot_{\text{\rm symm}}^{n}}\Bigr) \quad\text{for all $\al \in \Autmp(Y,\eta)$.}
\end{equation}

\section{Centralizer groups, proof of Theorems \ref{thm.main-every-lc} and \ref{thm.main-centralizer-group}}\label{sec.centralizer-groups}

The following is the key technical lemma showing that in certain cases, the centralizer of a Poisson suspension equals the Poisson suspension of the centralizer.

\begin{lemma}\label{lem.reduction-to-base}
Let $(Y,\eta)$ be a standard $\sigma$-finite measure space and $\Lambda < \Autmp(Y,\eta)$ a subgroup. Denote by $(X,\mu)$ the Poisson suspension of $(Y,\eta)$ and consider $\Lambdahat < \Autmp(X,\mu)$ given by $\Lambdahat = \{\behat \mid \be \in \Lambda\}$. Assume that there exists a sequence $\be_n \in \Lambda$ such that the orthogonal transformations $U_{\be_n}$ of $L^2_\R(Y,\eta)$ satisfy $U_{\be_n} \to \rho \cdot \id$ weakly for some $0 < \rho < 1$.

Then $\Lambdahat < \Autmp(X,\mu)$ is ergodic and the map $\al \mapsto \alhat$ is a bijection between the centralizer of $\Lambda$ inside $\Autmp(Y,\eta)$ and the centralizer of $\Lambdahat$ inside $\Autmp(X,\mu)$.
\end{lemma}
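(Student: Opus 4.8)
The plan is to transport the whole problem to the symmetric Fock space via the isomorphism $\Theta$ of \eqref{eq.Fock-space-decomposition} and the intertwining \eqref{eq.suspension-respects-Fock}. Writing $\cH_0 = \R 1$ and $\cH_k = L^2_\R(Y,\eta)^{\ot_{\text{\rm symm}}^{k}}$ for $k \geq 1$, so that the Fock space is $\bigoplus_{k \geq 0}\cH_k$, I denote by $\Gamma(U_\be) = \id \oplus \bigoplus_{k\geq1} U_\be^{\ot_{\text{\rm symm}}^{k}}$ the grading-preserving operator that is conjugate to $U_{\behat}$ under $\Theta$. The first step is to observe that the weak convergence $U_{\be_n} \to \rho\cdot\id$ propagates to all tensor powers: on simple tensors $\langle U_{\be_n}^{\ot k}(\xi_1\ot\cdots\ot\xi_k),\,\zeta_1\ot\cdots\ot\zeta_k\rangle = \prod_i \langle U_{\be_n}\xi_i,\zeta_i\rangle \to \rho^k \prod_i \langle\xi_i,\zeta_i\rangle$, and since the operators are contractions this extends to $U_{\be_n}^{\ot_{\text{\rm symm}}^{k}} \to \rho^k\id$ weakly on $\cH_k$. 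Summing over the grading (dominated by $\sum_k \|x_k\|\,\|y_k\| \leq \|x\|\,\|y\|$) then gives $\Gamma(U_{\be_n}) \to P_\rho$ weakly, where $P_\rho := \bigoplus_{k\geq0}\rho^k \id_{\cH_k}$.

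For ergodicity, a $\Lambdahat$-invariant function in $L^2_\R(X,\mu)$ corresponds under $\Theta$ to a vector $v = \sum_k v_k$ with $\Gamma(U_{\be_n})v = v$, and since the grading is preserved this means $U_{\be_n}^{\ot_{\text{\rm symm}}^{k}} v_k = v_k$ for all $n$ and $k$. Taking inner products with $v_k$ and letting $n\to\infty$ yields $\|v_k\|^2 = \rho^k \|v_k\|^2$, so $v_k = 0$ for every $k\geq1$ because $\rho^k < 1$. Hence $v \in \cH_0$, i.e.\ the invariant function is constant, and $\Lambdahat$ is ergodic.

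For the bijection, that $\al\mapsto\alhat$ is an injective homomorphism $\Autmp(Y,\eta)\to\Autmp(X,\mu)$ carrying the centralizer of $\Lambda$ into that of $\Lambdahat$ is routine, read off on the first chaos where $U_{\alhat}$ restricts to $U_\al$; the real content is surjectivity. Given $\gamma$ in the centralizer of $\Lambdahat$, put $W := \Theta^{-1}U_\gamma\Theta$. Since $\gamma$ commutes with every $\behat_n$, $W$ commutes with each $\Gamma(U_{\be_n})$ and hence with the weak limit $P_\rho$. As $P_\rho$ is positive with distinct eigenvalues $\rho^k$ and eigenspaces $\cH_k$, commuting with $P_\rho$ forces $W$ to preserve every $\cH_k$ (fixing the vacuum $\cH_0 = \R 1$, as $U_\gamma$ fixes constants). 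Thus $V := W|_{\cH_1}$ is an orthogonal transformation of $L^2_\R(Y,\eta)$, and restricting $W\Gamma(U_\be) = \Gamma(U_\be)W$ to $\cH_1$ shows $V$ commutes with every $U_\be$, $\be\in\Lambda$.

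The hard part, and the only place the point-transformation nature of $\gamma$ enters, is to show that $V$ is spatial, i.e.\ $V = U_\al$ for some $\al\in\Autmp(Y,\eta)$. Here I would use that $N_A = \theta(1_A)+\eta(A)1$ satisfies $U_\gamma N_A = \theta(V1_A)+\eta(A)1$, which has the same $\text{Poisson}(\eta(A))$ distribution as $N_A$ because $\gamma$ preserves $\mu$. Comparing characteristic functions through \eqref{eq.char-function-theta} and differentiating twice in $t$ (to invoke injectivity of the Fourier transform on finite measures) forces the Lévy measures to agree, $(V1_A)_*\eta = \eta(A)\delta_1$ on $\R\setminus\{0\}$, whence $V1_A = 1_{B_A}$ with $\eta(B_A) = \eta(A)$. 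Linearity of $V$ makes $A\mapsto B_A$ respect disjointness and finite unions, and running the same argument for $\gamma^{-1}$ (also in the centralizer) shows it is a measure-preserving Boolean isomorphism, so by point realization over the standard $\sigma$-finite base it is induced by some $\al\in\Autmp(Y,\eta)$ with $V = U_\al$; commutation of $V$ with the $U_\be$ gives $\al$ in the centralizer of $\Lambda$. Finally $U_\gamma$ and $U_{\alhat}$ are point transformations agreeing on the first chaos (both send $\theta(\xi)$ to $\theta(V\xi)$), which generates $L^\infty(X,\mu)$ because the $N_A$ do; hence $U_\gamma = U_{\alhat}$ and $\gamma = \alhat$, completing surjectivity. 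I expect this spatiality step to be the genuine obstacle, since it is where the abstract orthogonal-operator picture must be upgraded to an actual measure-preserving transformation of the base.
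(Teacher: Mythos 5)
Your proof is correct and follows essentially the same route as the paper: the weak convergence of the second-quantized operators to the grading operator $\bigoplus_k \rho^k \id$ with distinct eigenvalues yields ergodicity and forces any centralizing $U_\vphi$ to preserve the first chaos, and the resulting orthogonal transformation of $L^2_\R(Y,\eta)$ is identified as spatial via the L\'{e}vy measures of the infinitely divisible variables $\theta(\xi)$, which is exactly the paper's self-contained argument (the paper also cites \cite[Proposition 4.4]{Roy08} for this step). The only difference is cosmetic: you run the L\'{e}vy-measure comparison on indicator functions and finish with a Boolean-algebra point realization, whereas the paper uses general $\xi$ and the characterization of positivity and of $[0,1]$-valuedness in terms of $\rho_\xi$.
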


\begin{proof}
Consider the symmetric Fock space $\cF_s(L^2_\R(Y,\eta))$ and the isometry $\Theta : \cF_s(L^2_\R(Y,\eta)) \to L^2_\R(X,\mu)$ given by \eqref{eq.Fock-space-decomposition}. By \eqref{eq.suspension-respects-Fock} and the assumption that $U_{\be_n} \to \rho \cdot \id$ weakly,
$$\Theta^* \circ U_{\widehat{\be_n}} \circ \Theta \to T := \id \oplus \bigoplus_{n=1}^\infty \rho^n \cdot \id \quad\text{weakly.}$$
So if $\xi \in L^2_\R(X,\mu)$ is $\Lambdahat$-invariant, it follows that $\Theta^*(\xi) \in \R 1$ and thus $\xi \in \R 1$, proving the ergodicity of $\Lambdahat < \Autmp(X,\mu)$.

Take $\vphi \in \Autmp(X,\mu)$ centralizing $\Lambdahat$. It follows that $\Theta^* \circ U_\vphi \circ \Theta$ commutes with $T$, and thus globally preserves the eigenspace $L^2_\R(Y,\eta)$, so that $U_\vphi$ globally preserves $\theta(L^2_\R(Y,\eta))$, with $\theta$ defined by \eqref{eq.map-theta}. It follows from \cite[Proposition 4.4]{Roy08} that $\vphi = \alhat$ for a uniquely defined $\al \in \Autmp(Y,\eta)$. For completeness, we provide the following easy argument to obtain this result.

Define the orthogonal transformation $U$ of $L^2_\R(Y,\eta)$ such that $U_\vphi \circ \theta = \theta \circ U$. Take an arbitrary $\xi \in L^2_\R(Y,\eta)$. The characteristic function of the random variable $\theta(\xi)$ is given by \eqref{eq.char-function-theta}. This is saying that $\theta(\xi)$ is infinitely divisible with L\'{e}vy measure $\rho_\xi$ given by $\rho_\xi(\cU) = \eta(\xi^{-1}(\cU))$ for all Borel sets $\cU \subset \R \setminus \{0\}$. Since $\vphi$ is measure preserving, $\theta(\xi)$ and $U_\vphi(\theta(\xi))$ have the same distribution and hence, the same L\'{e}vy measure. Since $U_\vphi(\theta(\xi)) = \theta(U(\xi))$, it follows that $\rho_\xi = \rho_{U(\xi)}$ for all $\xi \in L^2_\R(Y,\eta)$.

Since $\xi \geq 0$ a.e.\ if and only if $\rho_\xi(-\infty,0) = 0$ and since $\xi \leq 1$ a.e.\ if and only if $\rho_\xi(1,+\infty) = 0$, we conclude that the orthogonal transformation $U \in \cO(L^2_\R(Y,\eta))$ maps positive functions to positive functions and maps functions $0 \leq \xi \leq 1$ to functions $0 \leq U(\xi) \leq 1$. This is only possible if $U = V_\al$ for some $\al \in \Autmp(Y,\eta)$. So, $\vphi = \alhat$.

Since $\vphi$ commutes with $\Lambdahat$, we get that $\al$ commutes with $\Lambda$ and the lemma is proven.
\end{proof}

\begin{lemma}\label{lem.aut-to-half-weakly}
Let $Y$ be a standard Borel space and $\eta$ an infinite, $\si$-finite, nonatomic Borel measure on $Y$. There exists a sequence $\be_n \in \Autmp(Y,\eta)$ such that the orthogonal transformations $U_{\be_n}$ of $L^2_\R(Y,\eta)$ satisfy $U_{\be_n} \to (1/2) \cdot \id$ weakly.
\end{lemma}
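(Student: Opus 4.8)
The plan is to reduce to a convenient concrete model and then to build $\be_n$ out of two decoupled mechanisms: a dyadic refinement that produces the factor $1/2$, and a dispersing shift that produces the weak limit $0$ on the complementary part.

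First I would invoke the isomorphism theorem for standard measure spaces: every infinite, $\si$-finite, nonatomic $(Y,\eta)$ is isomorphic mod null sets to $(\R,\text{Leb})$, hence to the model $Y = [0,1]\times\Z$ with $\eta = \nu\times\#$, where $\nu$ is Lebesgue measure on $[0,1]$ and $\#$ is counting measure on $\Z$. Since a measure space isomorphism conjugates the associated Koopman operators, it suffices to prove the statement in this model. I write points as $(s,k)$ and call a set of the form $J\times\{k\}$ (with $J\subset[0,1]$ an interval) a \emph{box}. Next I would record the standard reformulation: because $\|U_{\be_n}\|=1$ and finite linear combinations of indicators of boxes are dense in $L^2_\R(Y,\eta)$, it is enough to prove that $\langle U_{\be_n}1_E,1_F\rangle = \eta(\be_n(E)\cap F) \to (1/2)\,\eta(E\cap F)$ for all boxes $E,F$.

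For the construction, I would fix integers $m_n\to+\infty$ and split $[0,1]$ dyadically: let $D_n=\{s\in[0,1]:\lfloor 2^n s\rfloor \text{ is even}\}$, put $A_n = D_n\times\Z$ and $B_n=([0,1]\setminus D_n)\times\Z$, so that $Y=A_n\sqcup B_n$. I then define $\be_n$ to be the identity on $A_n$ and the shift $(s,k)\mapsto(s,k+m_n)$ on $B_n$. Each of $A_n,B_n$ is $\be_n$-invariant and $\be_n$ preserves $\nu\times\#$ and is bijective on each piece, so $\be_n\in\Autmp(Y,\eta)$. The point of choosing a \emph{fiberwise constant} splitting is that the coordinate responsible for the factor $1/2$ (the condition $s\in D_n$) and the coordinate responsible for the escape (the shift in $k$) are completely decoupled.

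Finally I would verify the limit. Splitting $E=(E\cap A_n)\sqcup(E\cap B_n)$ and using that $\be_n$ is the identity on $A_n$ gives $\eta(\be_n(E)\cap F)=\eta(E\cap F\cap A_n)+\eta(\be_n(E\cap B_n)\cap F)$. For the first term, since $\nu(D_n\cap J)\to(1/2)\,\nu(J)$ for every interval $J$ (equidistribution of the dyadic even/odd sets, i.e.\ $1_{D_n}\rightharpoonup 1/2$ weakly), one obtains $\eta(E\cap F\cap A_n)\to (1/2)\,\eta(E\cap F)$. For the second term, $\be_n$ shifts the $\Z$-coordinate of $E\cap B_n$ by $m_n$; truncating $E$ and $F$ to finitely many fibers $|k|\le K$ up to arbitrarily small measure and taking $m_n>2K$ makes the overlap with $F$ vanish, so this term tends to $0$. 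Hence $\eta(\be_n(E)\cap F)\to(1/2)\,\eta(E\cap F)$, which by the reduction above yields $U_{\be_n}\to(1/2)\cdot\id$ weakly. The only genuinely delicate point is arranging the $1/2$ and the $0$ to appear simultaneously against \emph{every} test set; decoupling the splitting coordinate from the dispersing coordinate, as above, is exactly what makes this routine.
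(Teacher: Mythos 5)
Your proof is correct and is essentially the paper's own argument: under the binary-digit isomorphism $[0,1]\cong\{0,1\}^{\N}$, your dyadic set $D_n$ is exactly the cylinder $\{x : x_n=0\}$, and your $\be_n$ (identity on one half, large shift in the $\Z$-coordinate on the other, tested against products of a "finite-coordinate" set with a finite subset of $\Z$) coincides with the paper's $\beta_n$ up to relabelling. The only difference is cosmetic: the Bernoulli coordinates in the paper make the half-overlap an exact identity once $n>2N$, whereas your interval model yields it only in the limit via $\nu(D_n\cap J)\to\tfrac12\nu(J)$.
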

\begin{proof}
Since all such $(Y,\eta)$ are isomorphic, we may realize $(Y,\eta) = (X \times \Z , \mu \times \lambda)$ with $(X,\mu) = (\{0,1\}^\N,\mu_0^\N)$, $\mu_0(0) = \mu_0(1) = 1/2$, and $\lambda$ the counting measure on $\Z$. We define $\beta_n \in \Autmp(Y,\eta)$ by
$$\beta_n(x,k) = \begin{cases} (x,k+n) &\;\;\text{if $x_n = 0$,}\\ (x,k) &\;\;\text{if $x_n = 1$.}\end{cases}$$
Take $N \in \N$, $A_0,A_1 \in \{0,1\}^N$ and $B_0,B_1 \subset [-N,N] \cap \Z$. Write $C_i = A_i \times \{0,1\}^{> N} \subset X$. When $n > 2N$, we have that
$$\langle U_{\beta_n}(1_{C_0} \ot 1_{B_0}) , 1_{C_1} \ot 1_{B_1} \rangle = \frac{1}{2} \langle 1_{C_0} \ot 1_{B_0} , 1_{C_1} \ot 1_{B_1} \rangle \; .$$
Since these vectors $1_C \ot 1_B$ are total in $L^2_\R(Y,\eta)$, it follows that $U_{\be_n} \to (1/2) \cdot \id$ weakly.
\end{proof}

\begin{proof}[{Proof of Theorem \ref{thm.main-centralizer-group}}]
To prove (i), take a standard $\si$-finite measure space $(Y_0,\eta_0)$ and a subgroup $\Lambda_0 < \Autmp(Y_0,\eta_0)$. Denote by $G$ the centralizer of $\Lambda_0$ inside $\Autmp(Y_0,\eta_0)$. We have to prove that $G$ is a centralizer group in the sense of Definition \ref{def.centralizer-group}.

Take an arbitrary nonatomic, infinite, $\si$-finite standard measure space $(Y_1,\eta_1)$. By Lemma \ref{lem.aut-to-half-weakly}, take a sequence $\be_n \in \Autmp(Y_1,\eta_1)$ such that $U_{\be_n} \to (1/2) \cdot \id$ weakly. Denote $(Y,\eta) = (Y_0 \times Y_1,\eta_0 \times \eta_1)$. Define $\Lambda_1 < \Autmp(Y,\eta)$ by $\Lambda_1 = \Lambda_0 \times \Autmp(Y_1,\eta_1)$.

The centralizer of $\Lambda_1$ inside $\Autmp(Y,\eta)$ equals $G \times \id$. The sequence $\id \times \be_n \in \Lambda$ satisfies $U_{\id \times \be_n} \to (1/2) \cdot \id$ weakly. Denote by $(X,\mu)$ the Poisson suspension of $(Y,\eta)$ and denote by $\widehat{\Lambda_1} < \Aut(X,\mu)$ the Poisson suspension $\Lambda_1$. By Lemma \ref{lem.reduction-to-base}, the subgroup $\widehat{\Lambda_1} < \Autmp(X,\mu)$ is ergodic and its centralizer inside $\Autmp(X,\mu)$ is isomorphic with $G$.

Since $\Autmp(X,\mu)$ is a Polish group, we can choose a countable subgroup $\Lambda$ of $\Autmp(X,\mu)$ whose closure equals the closure of $\widehat{\Lambda_1}$. Since the centralizer of $\Lambda$ equals the centralizer of $\widehat{\Lambda_1}$, it follows that $G$ is a centralizer group in the sense of Definition \ref{def.centralizer-group}.

To prove (ii), take a standard probability space $(Y_0,\eta_0)$ and a subgroup $\Lambda_0 < \Aut(Y_0,\eta_0)$ of nonsingular transformations. Denote by $G$ the centralizer of $\Lambda_0$ inside $\Aut(Y_0,\eta_0)$. We have to prove that $G$ is a centralizer group.

Denote by $\gamma$ the measure on $\R$ given by $d\gamma(t) = \exp(-t) \, dt$. Write $(Y_1,\eta_1) = (Y_0 \times \R, \eta_0 \times \gamma)$. For every $\be \in \Aut(Y_0,\eta_0)$, denote its \emph{Maharam extension} by
$$\betil \in \Autmp(Y_1,\eta_1) : \betil(y,t) = \bigl(\be(y),t + \log \frac{d(\be^{-1}_*(\eta))}{d\eta}(y)\bigr) \; .$$
For every $s \in \R$, denote by $\lambda_s$ the measure scaling automorphism $\lambda_s(y,t) = (y,t+s)$. Then
$$\Psi : \Aut(Y_0,\eta_0) \to \Autmp(Y_1,\eta_1) : \Psi(\be) = \betil$$
is a group homomorphism whose image is equal to
$$\{\zeta \in \Autmp(Y_1,\eta_1) \mid \forall s \in \R : \zeta \circ \lambda_s = \lambda_s \circ \zeta \;\;\text{a.e.}\} \; .$$
The image of $\Psi$ is closed and $\Psi$ is a homeomorphism between $\Aut(Y_0,\eta_0)$ and the image of $\Psi$.

Then define $(Y,\eta) = (Y_1 \times \R , \eta_1 \times \gamma) = (Y_0 \times \R \times \R, \eta_0 \times \gamma \times \gamma)$. For every $s \in \R$, define $\rho_s \in \Autmp(Y,\eta)$ by $\rho_s(y,t_1,t_2) = (y,t_1+s,t_2-s)$. Denote by $\Lambda < \Autmp(Y,\eta)$ the subgroup generated by $\betil \times \id$, $\be \in \Lambda_0$, $\rho_s$, $s \in \R$, and $\id \times \id \times \Autmp(\R,\gamma)$. Denote by $G_1$ the centralizer of $\Lambda$ inside $\Autmp(Y,\eta)$. Since we have already proven (i), it suffices to prove that $G \cong G_1$.

Take $\vphi \in G_1$. Since $\vphi$ commutes with $\id \times \id \times \Autmp(\R,\gamma)$, we get that $\vphi = \psi \times \id$ for some $\psi \in \Autmp(Y_1,\eta_1)$. Since $\vphi$ commutes with all $\rho_s$, we get that $\psi$ commutes with all $\lambda_s$. Since $\psi$ is measure preserving, we get that $\psi = \altil$ for a unique $\al \in G$. So the map $\al \mapsto \Psi(\al) \times \id$ implements the isomorphism $G \cong G_1$.
\end{proof}

\begin{proof}[{Proof of Theorem \ref{thm.main-every-lc}}]
Choose a probability measure $\mu$ that is equivalent with the Haar measure on $G$. View $G < \Aut(G,\mu)$ by right translations. Then the centralizer of $G$ equals the copy of $G$ in $\Aut(G,\mu)$ by left translations. By Theorem \ref{thm.main-centralizer-group.ii}, the group $G$ is a centralizer group. By \cite[Theorem E]{Dep10} (see also \cite[Proposition 8.5]{PV21}), there exists a full II$_1$ factor $M$ with separable predual such that $\Out M \cong G$.
\end{proof}

\section{Permanence properties}\label{sec.permanence}

When $(G_i)_{i \in I}$ is a countable family of Polish groups, the product $\prod_{i \in I} G_i$ with the topology of pointwise convergence is again a Polish group. We start with the following elementary observation.

\begin{proposition}\label{prop.direct-product}
If $(G_i)_{i \in I}$ is a countable family of centralizer groups, the product $\prod_i G_i$ also is a centralizer group.
\end{proposition}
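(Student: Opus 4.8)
The plan is to realize the product $\prod_{i\in I}G_i$ as a centralizer group by combining the individual realizations of each $G_i$ into a single measure-preserving action, using a product (or disjoint-union) construction on the underlying measure spaces.

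\textbf{The plan} is to reduce directly to Theorem \ref{thm.main-centralizer-group.i} by realizing $\prod_i G_i$ as the centralizer of a single subgroup of $\Autmp$ on a $\si$-finite space. By Definition \ref{def.centralizer-group}, each $G_i$ comes with an ergodic p.m.p.\ action $\Lambda_i \actson (X_i,\mu_i)$ on a standard probability space such that $G_i$ is isomorphic with the centralizer $C(\Lambda_i)$ inside $\Autmp(X_i,\mu_i)$. Enumerate $I = \{i_1,i_2,\ldots\}$ (finite or countable) and form the disjoint union $(Y,\eta) = \bigsqcup_i (X_i,\eta_i)$, where $\eta_i = c_i \mu_i$ with weights $c_{i_k} = 2^{-k}$ chosen \emph{pairwise distinct}. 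Then $(Y,\eta)$ is a standard $\si$-finite (indeed finite) measure space. I would let $\Lambda = \prod_i \Lambda_i < \Autmp(Y,\eta)$ act \emph{block-diagonally}, i.e.\ $(\lambda_i)_i$ acts on the block $X_j$ through its $j$-th coordinate $\lambda_j$. This is a genuine subgroup of $\Autmp(Y,\eta)$, so once I identify its centralizer with $\prod_i G_i$, Theorem \ref{thm.main-centralizer-group.i} finishes the proof.

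The key step is to show that $C(\Lambda) \cong \prod_i G_i$. The inclusion $\prod_i C(\Lambda_i) \hookrightarrow C(\Lambda)$ is immediate: a tuple $(\vphi_i)_i$ with $\vphi_i \in C(\Lambda_i)$ glues to a block-preserving $\vphi \in \Autmp(Y,\eta)$ that commutes with every block-diagonal element of $\Lambda$. For the reverse inclusion I would use that, since $\Lambda_i \actson X_i$ is ergodic and $\Lambda$ acts block-diagonally, each block $X_i$ is $\Lambda$-invariant with ergodic restricted action; hence the $\si$-algebra of $\Lambda$-invariant sets consists precisely of the unions of blocks, and the \emph{ergodic components} of $\Lambda \actson Y$ are exactly the $X_i$. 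Any $\vphi \in C(\Lambda)$ satisfies $\vphi \lambda \vphi^{-1} = \lambda$, so it preserves the $\si$-algebra of $\Lambda$-invariant sets and therefore permutes the ergodic components in a measure-preserving way. Because the weights $\eta(X_i) = c_i$ are pairwise distinct, this forced permutation is trivial, so $\vphi(X_i) = X_i$ for every $i$ (mod null sets). Thus $\vphi = \bigsqcup_i \vphi_i$ with $\vphi_i \in \Autmp(X_i,\eta_i) = \Autmp(X_i,\mu_i)$, and $\vphi_i$ commutes with the restricted action $\Lambda|_{X_i} = \Lambda_i$, i.e.\ $\vphi_i \in C(\Lambda_i) = G_i$.

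This gives a group isomorphism $C(\Lambda) \cong \prod_i G_i$, and I would note that it is a homeomorphism: on the disjoint union the weak topology of $\Autmp(Y,\eta)$ restricted to block-preserving automorphisms is exactly the product of the weak topologies on the $\Autmp(X_i,\mu_i)$, which is the product topology on $\prod_i G_i$. Finally, $\Lambda$ is a subgroup of $\Autmp(Y,\eta)$ with $(Y,\eta)$ a $\si$-finite standard measure space, so Theorem \ref{thm.main-centralizer-group.i} yields that $C(\Lambda) \cong \prod_i G_i$ is a centralizer group. The only point requiring care -- the \emph{main obstacle} -- is pinning down $C(\Lambda)$ \emph{exactly} rather than merely bounding it below: one must rule out automorphisms that mix distinct blocks. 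This is precisely what the distinct-weights device achieves, by making the blocks measure-theoretically rigid and thereby preventing the ``flip'' automorphisms that would appear if several factors carried equal mass.
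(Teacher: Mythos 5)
Your proof is correct, but it takes a genuinely different route from the paper. The paper uses the \emph{product} probability space $(X,\mu)=\prod_i(X_i,\mu_i)$ and the (countable) subgroup $\Gamma$ generated by the coordinate copies $\Gamma_i$ of the $\Lambda_i$; since each $\Lambda_i$ is ergodic, $\Gamma$ is ergodic on the product and any centralizing automorphism splits as $\prod_i\al_i$, so one lands directly on an ergodic p.m.p.\ action of a countable group on a probability space, i.e.\ exactly on Definition \ref{def.centralizer-group}, with no appeal to Theorem \ref{thm.main-centralizer-group}. You instead take the \emph{disjoint union} with pairwise distinct weights, use the atoms of the $\Lambda$-invariant $\si$-algebra to force any centralizing automorphism to preserve each block, and then invoke Theorem \ref{thm.main-centralizer-group.i} to convert the resulting non-ergodic, uncountable-group picture into a genuine centralizer group. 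This is legitimate (Theorem \ref{thm.main-centralizer-group.i} is proved earlier and independently), and your block-rigidity argument via distinct masses is clean --- arguably cleaner than the paper's ``immediate'' splitting step, which tacitly uses an independence argument to show that an automorphism commuting with all $\Gamma_i$ preserves each coordinate $\si$-algebra. What you give up is self-containedness: the paper's proof keeps the proposition an elementary observation, whereas yours outsources the ergodicity and countability issues to the Poisson-suspension machinery behind Theorem \ref{thm.main-centralizer-group.i}. Your closing remarks on the topological identification (convergence in $\Autmp(Y,\eta)$ restricted to block-preserving maps being the product topology) are the right thing to check and are fine.
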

\begin{proof}
Choose standard probability spaces $(X_i,\mu_i)$ and ergodic subgroups $\Lambda_i < \Autmp(X_i,\mu_i)$ whose centralizers are isomorphic with $G_i$ for all $i \in I$. Define the standard probability $(X,\mu) = \prod_{i \in I} (X_i,\mu_i)$. For every $i \in I$, denote by $\Gamma_i < \Autmp(X,\mu)$ the group $\Lambda_i$ acting in the $i$'th coordinate. Define $\Gamma < \Autmp(X,\mu)$ as the subgroup generated by all the $\Gamma_i$, $i \in I$. Since each $\Lambda_i < \Autmp(X_i,\mu_i)$ is ergodic, any element $\al \in \Autmp(X,\mu)$ that centralizes $\Gamma$ must be of the form $\prod_i \al_i$ with $\al_i \in \Autmp(X_i,\mu_i)$. From this the result follows immediately.
\end{proof}

With some more care, we also have the following continuous variant of Proposition \ref{prop.direct-product}.

Recall from \cite[Definition 7.3]{VW24} (see also \cite{Sut85}) the concept of a \emph{measured field} $(G_x)_{x \in X}$ of Polish groups. In a nutshell, this means that $(X,\mu)$ is a standard probability space and that, after discarding from $X$ a Borel set of measure zero, we are given a standard Borel structure on the disjoint union $G = \sqcup_{x \in X} G_x$ such that the following holds: the natural factor map $\pi : G \to X$ is Borel, there exists a dense sequence of Borel sections $\vphi_n : X \to G$ and, writing $G \times_\pi G = \{(g,h) \mid \pi(g) = \pi(h)\}$, the multiplication map $G \times_\pi G \to G$ and the inverse map $G \to G$ are Borel.

Given such a measured field $(G_x)_{x \in X}$ over $(X,\mu)$, we consider the group $\cG$ of all Borel sections $\psi : X \to G$, where we identify sections that are equal a.e. With pointwise multiplication and with the topology of convergence in measure, where $\psi_k \to \psi$ if and only if $\mu\bigl(\{x \in X \mid d(\psi_k(x),\psi(x)) > \eps\}\bigr) \to 0$ for all $\eps > 0$, we get that $\cG$ is a Polish group.

Note that when $X$ is a countable set and $\mu$ is an atomic probability measure that assigns a positive value to each atom, then $\cG = \prod_{i \in X} G_i$ as above. In that sense, $\cG$ is a ``direct integral'' version of the usual product of Polish groups.

\begin{proposition}\label{prop.direct-integral}
Let $(X,\mu)$ be a standard probability space and let $(G_x)_{x \in X}$ be a measured field of second countable locally compact groups. As above, denote by $\cG$ the Polish group of measurable sections of this field, identifying sections that are equal a.e.\ and using the topology of convergence in measure. Then $\cG$ is a centralizer group.
\end{proposition}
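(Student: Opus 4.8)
The plan is to realize $\cG$ \emph{directly} as the centralizer of the group of right translations by sections on a suitable fibered probability space, and then invoke Theorem \ref{thm.main-centralizer-group.ii}. Concretely, I would first use the measured field structure (and a measurable Haar system, cf.\ \cite{VW24,Sut85}) to choose, for a.e.\ $x$, a probability measure $\mu_x$ on $G_x$ equivalent to the Haar measure of $G_x$ and depending measurably on $x$. I then form the fibered standard probability space $(Y,\nu)$ with $Y = \sqcup_{x \in X} G_x$ and $\nu = \int_X \mu_x \, d\mu(x)$. For $\phi \in \cG$, define the right and left translations $R_\phi(x,g) = (x, g\,\phi(x))$ and $L_\phi(x,g) = (x, \phi(x)\,g)$. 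Since left and right translations of $G_x$ are nonsingular with respect to the Haar measure, and the relevant modular and Radon--Nikodym factors vary measurably in $x$, these are well-defined elements of $\Aut(Y,\nu)$. One checks that $\phi \mapsto R_\phi$ and $\phi \mapsto L_\phi$ are continuous isomorphisms of $\cG$ onto its images (with respect to convergence in measure on one side and the Polish topology of $\Aut(Y,\nu)$ on the other), that the two images commute, and hence that $L(\cG) \subseteq C(R(\cG))$.

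The heart of the argument is the reverse inclusion $C(R(\cG)) \subseteq L(\cG)$. First, since $R(\cG)$ acts transitively on each fiber $\{x\} \times G_x$ (the value $\phi(x)$ ranges over all of $G_x$ as $\phi$ ranges over $\cG$), the $R(\cG)$-orbits are exactly the fibers, and their quotient is the base $(X,\mu)$. Any $\vphi \in \Aut(Y,\nu)$ commuting with $R(\cG)$ permutes the orbits, hence preserves the fiber equivalence relation and descends to a nonsingular $\bar\vphi \in \Aut(X,\mu)$, so that $\vphi(x,g) = (\bar\vphi(x), \kappa_x(g))$ for measurable bijections $\kappa_x : G_x \to G_{\bar\vphi(x)}$. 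The commutation relation $\vphi \circ R_\phi = R_\phi \circ \vphi$ then reads $\kappa_x(g\,\phi(x)) = \kappa_x(g)\,\phi(\bar\vphi(x))$ for all $g$, a.e.\ $x$, and all $\phi \in \cG$.

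Evaluating this identity at the identity section $g = e_x$ yields $\phi(\bar\vphi(x)) = \kappa_x(e_x)^{-1}\,\kappa_x(\phi(x))$, which exhibits $\phi(\bar\vphi(x))$ as a \emph{fixed} function of $\phi(x)$, independent of the section $\phi$. I would then argue that this forces $\bar\vphi = \id$: because $\cG$ consists of \emph{all} measurable sections, for a.e.\ pair of distinct points the evaluation map $\phi \mapsto (\phi(x),\phi(\bar\vphi(x)))$ has full image in $G_x \times G_{\bar\vphi(x)}$, which is incompatible with the functional dependence just found unless $\bar\vphi(x) = x$ a.e. With $\bar\vphi = \id$, the relation collapses to $\kappa_x(\phi(x)) = \kappa_x(e_x)\,\phi(x)$ for all $\phi$, so $\kappa_x$ is left translation by $c(x) := \kappa_x(e_x)$; measurability of $\vphi$ makes $c$ a section, i.e.\ $\vphi = L_c$ with $c \in \cG$. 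Thus $\cG \cong L(\cG) = C(R(\cG))$ as Polish groups, and taking $\Lambda = R(\cG) < \Aut(Y,\nu)$, Theorem \ref{thm.main-centralizer-group.ii} shows that $\cG$ is a centralizer group.

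The conceptual skeleton above is clean, and I expect the main obstacle to be the measured-field bookkeeping rather than the centralizer computation itself. Three points will need care: producing the measurable Haar system and the fiberwise probability densities $\mu_x$ making $(Y,\nu)$ a genuine standard probability space; verifying that $R_\phi$ and $L_\phi$ are honestly nonsingular with measurable Radon--Nikodym cocycles (controlling the modular functions of the fibers); and, most importantly, justifying the claim that the evaluation map from $\cG$ at two distinct points has full image for a.e.\ pair. This last statement---that $\cG$ is not ``graph-like'' between generic fibers---is the crux that kills $\bar\vphi$, and I would establish it from the abundance of measurable sections, using the dense sequence of Borel sections provided by the measured field together with the group structure of $\cG$ to modify the values near $x$ and near $\bar\vphi(x)$ independently.
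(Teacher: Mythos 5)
Your overall strategy is the same as the paper's: form the fibered space $Y = \sqcup_{x \in X} G_x$ with fiberwise probability measures equivalent to the Haar measures (the paper obtains these measurably from \cite[Proposition 10.2]{VW24}), let $\cG$ act by left and right translations $L_\phi$, $R_\phi$, identify $\cG \cong L(\cG)$ with the centralizer of the right translations, and invoke Theorem \ref{thm.main-centralizer-group.ii}. The gap is that your central claim $C(R(\cG)) = L(\cG)$ is false in general. If $G_x = \{e\}$ for a.e.\ $x$ (or merely on a positive-measure subset $X_1 \subset X$), then $R(\cG)$ acts trivially on the corresponding part of $Y$, which is just a copy of $(X_1,\mu|_{X_1})$, so its centralizer there contains all of $\Aut(X_1,\mu|_{X_1})$, while $L(\cG)$ acts trivially there. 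Your own argument flags exactly this: to kill $\bar\vphi$ you need two sections agreeing at $x$ but differing at $\bar\vphi(x)$, which requires $G_{\bar\vphi(x)}$ to have at least two elements. So commuting with $R(\cG)$ alone does not force the induced base transformation $\bar\vphi$ to be trivial, and the statement you reduce to is not true for arbitrary measured fields as allowed by the proposition.

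The paper closes precisely this hole by enlarging the group $\Lambda$ whose centralizer is computed. It takes an auxiliary nonatomic standard probability space $(Y_1,\eta_1)$ and an \emph{injective} Borel map $\rho : X \to \Autmp(Y_1,\eta_1)$, works on $G \times Y_1$, and lets $\Lambda$ be generated by the right translations together with $\id \times \Autmp(Y_1,\eta_1)$ and the single transformation $\delta(g,y) = (g,\rho(\pi(g))(y))$. Commuting with $\id \times \Autmp(Y_1,\eta_1)$ forces a centralizing element to split as $\Delta_0 \times \id$, and commuting with $\delta$ plus injectivity of $\rho$ forces $\pi \circ \Delta_0 = \pi$, i.e.\ $\bar\vphi = \id$. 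Only with fiber-preservation as an extra hypothesis does the paper prove (and does your argument correctly give) the implication ``commutes with all right translations $\Rightarrow$ left translation by a section''. So you need either this additional generator or a separate treatment of the degenerate part of the field; with that correction, the remaining ingredients of your proposal (the measurable Haar system, nonsingularity and topological embedding of $L$ and $R$, and the two-point evaluation argument on nontrivial fibers, carried out along a dense sequence of sections to handle the a.e.\ quantifiers) all go through and coincide with the paper's proof.
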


\begin{proof}
We discard from $X$ a Borel set of measure zero, so that $G = \sqcup_{x \in X} G_x$ is equipped with a standard Borel structure satisfying the conditions explained above (see \cite[Definition 7.3]{VW24}). Using \cite[Proposition 10.2]{VW24}, we may choose probability measures $\gamma_x$ on $G_x$ such that for every $x \in X$, $\gamma_x$ is equivalent with the Haar measure of $G_x$ and such that for every positive Borel function $F : G \to [0,+\infty)$, the map
$$X \to [0,+\infty] : x \mapsto \int_{G_x} F \, d\gamma_x$$
is Borel. We then denote by $\gamma$ the probability measure on the Borel $\sigma$-algebra of $G$ given by
$$\gamma(\cU) = \int_X \gamma_x(\cU \cap G_x) \, d\mu(x) = \int_X  \Bigl(\int_{G_x}  1_{\cU} \, d\gamma_x  \Bigr) \, d\mu(x) \quad\text{for every Borel set $\cU \subset G$.}$$
By construction, $\pi_*(\gamma) = \mu$. For every Borel section $\vphi : X \to G$, define the Borel bijections
$$\al_\vphi : G \to G : \al_\vphi(g) = \vphi(\pi(g)) g \quad\text{and}\quad \be_\vphi : G \to G: \be_\vphi(g) = g \vphi(\pi(g))^{-1} \; .$$
Since every $\gamma_x$ is equivalent with the Haar measure of $G_x$, it follows that all $\al_\vphi$ and $\be_\vphi$ are nonsingular automorphisms of $(G,\gamma)$. By construction,
$$\al : \cG \to \Aut(G,\gamma) : \vphi \mapsto \al_\vphi \quad\text{and}\quad \be : \cG \to \Aut(G,\gamma) : \vphi \mapsto \be_\vphi$$
are faithful continuous group homomorphisms.

Note that whenever $G_0$ is a locally compact second countable group and $\gamma_0$ is a probability measure on $G_0$ that is equivalent with the Haar measure of $G$, then the embedding $G_0 \hookrightarrow \Aut(G_0,\gamma_0)$ given by left translation has a closed image and is a homeomorphism of $G_0$ onto this image. It follows that the same holds for $\al : \cG \to \Aut(G,\gamma)$.

We claim that $\Delta \in \Aut(G,\gamma)$ belongs to $\al(\cG)$ if and only if $\Delta$ commutes with $\be(\cG)$ and satisfies $\pi(\Delta(g)) = \pi(g)$ for a.e.\ $g \in G$. One implication being obvious, assume that $\Delta \in \Aut(G,\gamma)$ commutes with $\be(\cG)$ and satisfies $\pi(\Delta(g)) = \pi(g)$ for a.e.\ $g \in G$. Choose a dense sequence of Borel sections $\vphi_n : X \to G$. Take a conull Borel set $X_0 \subset X$ such that for every $x \in X_0$, the following holds:
\begin{align*}
& \pi(\Delta(g)) = x \quad\text{for $\gamma_x$-a.e.\ $g \in G_x$, and}\\
& (\Delta \circ \be_{\vphi_n})(g) = (\be_{\vphi_n} \circ \Delta)(g) \quad\text{for all $n \in \N$ and $\gamma_x$-a.e.\ $g \in G_x$.}
\end{align*}
So for every $x \in X_0$, there is a unique $\vphi(x) \in G_x$ such that $\Delta(g) = \vphi(x) g$ for $\gamma_x$-a.e.\ $g \in G_x$. It follows that $\vphi \in \cG$ and $\Delta = \al(\vphi)$. So the claim is proven.

Take any nonatomic standard probability space $(Y_1,\eta_1)$. Choose an injective Borel map $\rho : X \to \Autmp(Y_1,\eta_1)$. Write $(Y,\eta) = (G \times Y_1,\gamma \times \eta_1)$ and define $\delta \in \Autmp(Y,\eta)$ by $\delta(g,y) = (g,\rho(\pi(g))(y))$.

Define $\Lambda < \Aut(Y,\eta)$ as the subgroup generated by $\Lambda_0 \times \Autmp(Y_1,\eta_1)$ and $\delta$. We prove that the centralizer of $\Lambda$ inside $\Aut(Y,\eta)$ is given by $\al(\cG) \times \id$. The proposition then follows from Theorem \ref{thm.main-centralizer-group.ii}.

Choose $\Delta \in \Aut(Y,\eta)$ that commutes with $\Lambda$. Since $\Delta$ commutes with $\id \times  \Autmp(Y_1,\eta_1)$, we get that $\Delta = \Delta_0 \times \id$ for some $\Delta_0 \in \Aut(G,\gamma)$. Since $\Delta$ commutes with $\delta$ and $\rho$ is injective, it follows that $\pi \circ \Delta_0 = \pi$. Since $\Delta_0$ also commutes with $\Lambda_0$, the result follows from the claim above.
\end{proof}

\begin{remark}
It is plausible that for any measured field $(G_x)_{x \in X}$ of Polish groups with the property that a.e.\ $G_x$ is a centralizer group, also the Polish group $\cG$ of measurable sections is a centralizer group. To prove such a result, one would need to make a measurable choice of $\Lambda_x < \Aut(Y_x,\eta_x)$ such that $G_x$ is the centralizer of $\Lambda_x$ inside $\Aut(Y_x,\eta_x)$. While it sounds plausible that such a measurable choice always exists, we were unable to prove this.
\end{remark}

\section{Closed groups of permutations of a countable set}\label{sec.closed-groups-of-permutations}

We denote by $S_\infty$ the Polish group of all permutations of $\N$, equipped with the topology of pointwise convergence. The following result was already proven in \cite[Corollary 8.2]{PV21}, but with the spectral methods of Section \ref{sec.centralizer-groups}, we can give a considerably easier proof.

\begin{theorem}\label{thm.closed-subgroup-Sinfty}
Every closed subgroup of $S_\infty$ is a centralizer group.
\end{theorem}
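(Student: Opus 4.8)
The plan is to realize an arbitrary closed subgroup $G < S_\infty$ as the centralizer of an explicit subgroup $\Lambda$ inside $\Autmp(Y,\eta)$ for a suitable $\sigma$-finite standard measure space $(Y,\eta)$, and then to invoke Theorem \ref{thm.main-centralizer-group.i} to conclude that $G$ is a centralizer group. This is precisely where the spectral method yields the simplification over \cite{PV21}: once $G$ is exhibited as such a centralizer, no further work with II$_1$ factors is required. The combinatorial input is the standard description of closed subgroups of $S_\infty$. Writing $\cO_k$ for the countable set of $G$-orbits on $\N^k$ and $c_k : \N^k \to \cO_k$ for the orbit map, the closedness of $G$ is equivalent to the statement that $\si \in S_\infty$ lies in $G$ if and only if $c_k(\si \bar n) = c_k(\bar n)$ for all $k \geq 1$ and all $\bar n \in \N^k$; that is, $G$ is the automorphism group of the canonical relational structure on $\N$ given by all its orbits on finite tuples.

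First I would fix a nonatomic standard probability space $(Y_0,\eta_0)$, put $I = \bigsqcup_{k \ge 1} \N^k$ with counting measure $\zeta$, and set $(Y,\eta) = (I \times Y_0,\ \zeta \times \eta_0)$, an infinite $\sigma$-finite space. Each $\si \in G$ acts on $Y$ by $\widehat{\si} \times \id$, where $\widehat{\si}$ permutes $I$ by the diagonal action $\bar n \mapsto \si \bar n$ on each block $\N^k$. To force the centralizer to consist only of such automorphisms, I would include in $\Lambda$ the diagonal copy $\{\id_I \times \tau \mid \tau \in \Autmp(Y_0,\eta_0)\}$, whose centralizer inside $\Autmp(Y,\eta)$ is exactly $\operatorname{Sym}(I) \times \id$: an automorphism commuting with this highly ergodic diagonal action cannot mix the $Y_0$-direction and must permute the fibers $\{i\} \times Y_0$. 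It then remains to cut $\operatorname{Sym}(I)$ down to $\{\widehat{\si} \mid \si \in G\}$ by adjoining further generators.

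The surviving permutations of $I$ should be singled out by two kinds of constraints. The first is preservation of the \emph{unary} colourings, namely the arity block and the orbit colour $c_k$, which I would encode by fixing an injective family $(T_o)_o$ in $\Autmp(Y_0,\eta_0)$ indexed by the countably many colours and adjoining the automorphism $(\bar n, y) \mapsto (\bar n, T_{c_k(\bar n)}(y))$; a permutation $\tau \times \id$ commutes with it precisely when $\tau$ preserves every colour. The second is the \emph{relational} data linking blocks of different arities, namely the coordinate-permutation actions of each $S_k$ on $\N^k$ together with the diagonal inclusions $\N^k \hookrightarrow \N^{k+1}$, which together force $\tau = \widehat{\si}$ for a single $\si \in S_\infty$. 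The coordinate permutations lie in $\operatorname{Sym}(I)$ and can be adjoined to $\Lambda$ as $p \times \id$, so that commutation records exactly the corresponding invariance; the inclusions, however, are only partial injections of $I$.

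I expect this last point to be the main obstacle: faithfully encoding the higher-arity, inter-block relations as genuine measure-preserving automorphisms whose commutant detects precisely the relation-preserving permutations, without spuriously enlarging or shrinking the centralizer. The unary colourings are straightforward, but rigidifying the action across all arities by one underlying permutation of $\N$ requires care, for instance realizing the needed partial injections as graphs inside auxiliary product blocks and checking that no residual symmetry survives. Once the identity $C_{\Autmp(Y,\eta)}(\Lambda) = \{\widehat{\si} \times \id \mid \si \in G\}$ is established, I would finally verify that $\si \mapsto \widehat{\si} \times \id$ is a topological isomorphism of $G$ onto this centralizer, comparing the pointwise topology on $S_\infty$ with convergence in measure on $\Autmp(Y,\eta)$, and then conclude by Theorem \ref{thm.main-centralizer-group.i}.
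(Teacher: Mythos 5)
Your overall strategy is the same as the paper's: use the fact that a closed $G < S_\infty$ is exactly the group of permutations preserving all $G$-orbits on all $\N^k$, encode this orbit structure into a measure space, and identify the centralizer of a suitable $\Lambda$ with $G$. Your first two steps are sound: the centralizer of the diagonal copy of $\Autmp(Y_0,\eta_0)$ is indeed $\operatorname{Sym}(I) \times \id$, and the unary colourings (arity and orbit colour) are correctly enforced by an injective family $(T_o)_o$ in $\Autmp(Y_0,\eta_0)$. But the step you defer as ``the main obstacle'' --- forcing a colour-preserving permutation of $I = \bigsqcup_k \N^k$ to be induced diagonally by a \emph{single} permutation $\si_0$ of $\N$ --- is the entire mathematical content of the theorem, and your sketch (``realizing the needed partial injections as graphs inside auxiliary product blocks'') does not supply it. Without it the construction fails: for $G = S_\infty$ the orbits on $\N^2$ are the diagonal and its complement, and one can take $\tau|_{\N} = \id$ while letting $\tau|_{\N^2}$ be any nontrivial colour-preserving, flip-equivariant permutation of the off-diagonal; this lies in your centralizer but is not of the form $\widehat{\si} \times \id$, so the centralizer is strictly larger than $G$ in general. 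Adjoining the coordinate-permutation actions of $S_k$ does not repair this, since commuting with $S_k$ does not force a permutation of $\N^k$ to be a diagonal power.

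The paper closes exactly this gap with a concrete device that you should compare with. It works on the \emph{probability} space $(X,\mu) = (\T,\lambda)^\N \times \prod_{n,k} (\T,\lambda)^{I_{n,k}}$, where $I_{n,k}$ enumerates the $G$-orbits in $\N^n$, with diagonal copies of $\Autmp(\T,\lambda)$ in each factor playing the role of your diagonal $\Autmp(Y_0,\eta_0)$ (via Lemma \ref{lem.mutual-centralizer-Bernoulli}, any commuting $\psi$ is of the form $\si_0 \times \prod_{n,k}\si_{n,k}$). The inter-arity coherence is then enforced by the automorphisms $\theta_{n,k,j}$ that multiply the circle coordinate $(x_0)_{i_j}$ of the first factor into the coordinate $(x_{n,k})_i$ of the orbit factor: commutation with $\theta_{n,k,j}$ forces $(\si_{n,k}(i))_j = \si_0(i_j)$, hence $\si_{n,k} = \si_0^n|_{I_{n,k}}$ and $\si_0 \in \overline{G} = G$. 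If you want to salvage your $\sigma$-finite setup, you would need an analogous ``copying'' mechanism linking the fiber over $\bar n \in \N^k$ to the fibers over its coordinates $n_j \in \N$, which essentially amounts to rebuilding the paper's $\theta_{n,k,j}$; note also that the paper's route lands directly on a probability space, so it needs only Lemma \ref{lem.mutual-centralizer-Bernoulli} and not the Poisson-suspension machinery behind Theorem \ref{thm.main-centralizer-group.i}.
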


The main point in proving Theorem \ref{thm.closed-subgroup-Sinfty} is the following lemma, which was obtained as \cite[Lemma 8.6]{PV21} and for which we give an easy spectral proof.

\begin{lemma}\label{lem.mutual-centralizer-Bernoulli}
Let $(X_0,\mu_0)$ be a standard nonatomic probability space. Put $(X,\mu) = (X_0,\mu_0)^\N$. We view $S_\infty < \Aut(X,\mu)$ by permuting the indices and we view $\Autmp(X_0,\mu_0) < \Aut(X,\mu)$ as diagonal transformations.

In this way, $S_\infty$ and $\Autmp(X_0,\mu_0)$ are ergodic subgroups of $\Aut(X,\mu)$ that are each other's centralizer inside $\Aut(X,\mu)$.
\end{lemma}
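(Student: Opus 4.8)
The plan is to first cut the problem down from the full nonsingular group to the measure-preserving group, and then to analyze the two commutants through the chaos decomposition of $L^2(X,\mu)$, exactly in the spirit of Lemma~\ref{lem.reduction-to-base}. I would begin with the reduction. If $\vphi\in\Aut(X,\mu)$ commutes with a subgroup $\Gamma<\Autmp(X,\mu)$ that acts ergodically, then writing $\omega_\vphi=d(\vphi_*\mu)/d\mu$ and using the Radon--Nikodym cocycle identity together with $\omega_g\equiv 1$ for $g\in\Gamma$, a direct computation gives $\omega_\vphi\circ g=\omega_\vphi$ for all $g\in\Gamma$; ergodicity forces $\omega_\vphi$ to be constant, hence $\omega_\vphi\equiv 1$ and $\vphi\in\Autmp(X,\mu)$. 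Once I have shown that $S_\infty$ and the diagonal copy of $\Autmp(X_0,\mu_0)$ are both ergodic and measure preserving, this reduces both centralizer computations to the measure-preserving setting, where Koopman operators are at my disposal.

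Next I would set up the chaos decomposition. Writing $L^2_0=L^2_\R(X_0,\mu_0)\ominus\R 1$, one has $L^2_\R(X,\mu)=\bigoplus_{F}\cH_F$ over finite $F\subset\N$, where $\cH_F=\bigotimes_{i\in F}L^2_0$ is placed in the coordinates indexed by $F$. Here $S_\infty$ maps $\cH_F$ isometrically onto $\cH_{\pi F}$, while a diagonal $\sigma$ preserves each $\cH_F$ and acts there as $U_\sigma^{\otimes F}$. Ergodicity of $S_\infty$ is then immediate: an invariant vector has constant norm along each $S_\infty$-orbit of $k$-subsets, and since there are infinitely many $k$-subsets for $k\ge 1$ this norm must vanish, leaving only the constants. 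For ergodicity of the diagonal group I would prove that for nonatomic $X_0$ the diagonal action on $(L^2_0)^{\otimes k}$ has no nonzero invariant vector when $k\ge 1$: restricting to the subgroups $S_N<\Autmp(X_0,\mu_0)$ permuting a partition of $X_0$ into $N$ equal atoms, an invariant vector becomes a diagonally $S_N$-invariant function on the grid $\{1,\dots,N\}^k$, hence depends only on the coincidence pattern of its indices; as $N\to\infty$ every pattern with a coincidence has vanishing measure, so the vector becomes constant and then zero by the mean-zero constraint. This coincidence-pattern computation is the technical core that I reuse below.

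For the centralizers I would argue through the component maps, writing $\vphi=(\vphi^{(i)})_i$ with $\vphi^{(i)}\colon X\to X_0$. If $\vphi$ commutes with $S_\infty$, equivariance gives $\vphi^{(i)}(x)=\vphi^{(1)}(s_i x)$ for the transposition $s_i=(1\,i)$, and $\vphi^{(1)}$ is invariant under all permutations of the coordinates $2,3,\dots$; by the Hewitt--Savage zero--one law applied to the i.i.d.\ sequence $(x_2,x_3,\dots)$, the map $\vphi^{(1)}$ depends only on $x_1$, say $\vphi^{(1)}(x)=h(x_1)$, so $\vphi^{(i)}(x)=h(x_i)$ is diagonal with $h\in\Autmp(X_0,\mu_0)$. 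If instead $\vphi$ commutes with the diagonal group, each $\psi=\vphi^{(i)}$ is $\Autmp(X_0,\mu_0)$-equivariant, so the law of $(\psi(x),x_1,x_2,\dots)$ is a diagonally invariant measure on $X_0^{\{0\}\cup\N}$ with i.i.d.\ marginal on the $\N$-coordinates. I would first establish the elementary fact that a diagonally invariant probability measure on $X_0^m$ with pairwise distinct coordinates a.e.\ must equal $\mu_0^{\otimes m}$ (the same finite-grid argument: $S_N$-invariance forces dependence only on the coincidence pattern, and distinctness kills all but the generic pattern). If $\psi$ avoided every coordinate, this would make $\psi(x)$ independent of $(x_i)_{i\ge 1}$ while being a function of them, forcing $\psi$ constant and contradicting $\psi(x)\sim\mu_0$. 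Hence $\{\psi=x_j\}$ has positive measure for some $j$; being diagonally invariant and the diagonal action being ergodic, it has full measure, so $\vphi^{(i)}(x)=x_{c(i)}$ for a unique $c(i)$, and bijectivity of $\vphi$ forces $c\in S_\infty$.

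The step I expect to be the main obstacle is the rigidity in the second centralizer computation, namely ruling out a $\vphi$ that genuinely mixes chaos levels. In spectral language this is precisely a Schur--Weyl statement for $\Autmp(X_0,\mu_0)$: for nonatomic $X_0$ there should be no nonzero bounded intertwiner $(L^2_0)^{\otimes k}\to(L^2_0)^{\otimes j}$ when $j\ne k$, since the only candidate intertwiners are the partition-algebra contractions built from pointwise multiplication, and these are unbounded in the nonatomic, infinite-dimensional limit; only the tensor-factor permutations survive, and these connect a level to itself. I expect the cleanest direct route to be the finite-$S_N$ approximation with partition-algebra norm estimates, isolating the through-permutation diagrams as the only ones of bounded norm as $N\to\infty$. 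The equivariant-map argument above is a way of packaging the same phenomenon while sidestepping those norm estimates, which is why I would present that version in the final write-up.
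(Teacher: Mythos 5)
Your proposal is correct, and for the harder inclusion (the commutant of the diagonal copy of $\Autmp(X_0,\mu_0)$) it takes a genuinely different route from the paper. The reduction to $\Autmp(X,\mu)$ via the Radon--Nikodym cocycle and ergodicity is exactly the implicit first step of the paper, and your treatment of the commutant of $S_\infty$ (Hewitt--Savage applied to the components $\vphi^{(i)}$) is essentially the paper's argument, which phrases the same fact as ergodicity of the stabilizers $S_{\infty,k}$ acting on the complementary coordinates. The real divergence is in the second half. The paper stays spectral: it constructs $\al_n\in\Autmp(X_0,\mu_0)$ with $U_{\al_n}\to P_{\C 1}+\tfrac12 P_{(\C 1)^\perp}$ weakly, deduces that $U_\psi$ preserves the first chaos $L_1$, and then pins down $\psi$ using the multiplicativity of a Haar unitary $v$ (the relation $w^2=U_\psi(\pi_i(v^2))\in L_1$ forces $U_\psi(\pi_i(v))$ to sit in a single tensor leg). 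In particular it never needs the Schur--Weyl-type non-existence of intertwiners between distinct chaos levels that you single out as the main obstacle in your last paragraph, so the spectral route is lighter than you fear. Your alternative is probabilistic: you classify the diagonally invariant law of $(\vphi^{(i)}(x),x_1,\dots,x_m)$, showing by finite-permutation approximation that a diagonally invariant measure on $X_0^{m+1}$ with marginals $\mu_0$ and pairwise distinct coordinates a.e.\ must be $\mu_0^{\otimes(m+1)}$, and then running the dichotomy: either $\vphi^{(i)}$ coincides with some coordinate on a set of positive, hence by ergodicity full, measure, or it is independent of the whole sequence and therefore a.e.\ constant, contradicting $\vphi^{(i)}_*\mu=\mu_0$. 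This checks out; the one point you must state explicitly in the sub-lemma is that the marginals of the joining are absolutely continuous with respect to $\mu_0$ (here they equal $\mu_0$), since elements of $\Autmp(X_0,\mu_0)$ are only defined up to null sets and diagonal invariance of a measure with singular marginals would not even be well posed. As for what each approach buys: yours is self-contained at the level of exchangeability and joinings and avoids Koopman operators entirely, while the paper's reuses the weak-convergence technology already set up for Lemma \ref{lem.reduction-to-base} and is shorter on the page once that machinery is available.
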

\begin{proof}
For ever $\al \in \Autmp(X_0,\mu_0)$, we denote by $\vphi_\al \in \Autmp(X,\mu)$ the diagonal transformation $(\vphi_\al(x))_n = \al(x_n)$. When $\al$ is a mixing transformation, $\vphi_\al$ is mixing. So, $\Autmp(X_0,\mu_0)$ is an ergodic subgroup of $\Aut(X,\mu)$. Since $S_\infty < \Autmp(X,\mu)$ contains a Bernoulli shift, also $S_\infty < \Autmp(X,\mu)$ is an ergodic subgroup. To prove the lemma, it thus suffices to prove that $S_\infty$ and $\Autmp(X_0,\mu_0)$ are each other's centralizer inside $\Autmp(X,\mu)$.

First assume that $\vphi \in \Autmp(X,\mu)$ commutes with $S_\infty$. For every $k \in \N$, denote by $S_{\infty,k} < S_\infty$ the stabilizer of $k \in \N$. Since the action of $S_{\infty,k}$ on $(X_0,\mu_0)^{\N \setminus \{k\}}$ is ergodic and since $\vphi$ commutes with all $S_{\infty,k}$, it follows that $\vphi = \prod_{k \in \N} \al_k$, where $\al_k \in \Autmp(X_0,\mu_0)$. Since $\vphi$ commutes with $S_\infty$, it then follows that all $\al_k$ are equal. So, $\vphi = \vphi_\al$ for some $\al \in \Autmp(X_0,\mu_0)$.

Conversely, assume that $\psi \in \Autmp(X,\mu)$ commutes with all $\vphi_\al$, $\al \in \Autmp(X_0,\mu_0)$. For every $\be \in \Autmp(X,\mu)$, we denote by $U_\be \in \cU(L^2(X,\mu))$ the corresponding unitary operator. Below we prove that we may choose a sequence $\al_n \in \Autmp(X_0,\mu_0)$ such that
\begin{equation}\label{eq.my-weak-convergence}
U_{\al_n} \to P_{\C \cdot 1} + \frac{1}{2} P_{(\C \cdot 1)^\perp} \quad\text{weakly in $B(L^2(X_0,\mu_0))$.}
\end{equation}
Write $\vphi_n = \vphi_{\al_n}$. We identity $L^2(X,\mu)$ with the infinite tensor product of the Hilbert spaces $L^2(X_0,\mu_0)$ w.r.t.\ the vector $1 \in L^2(X_0,\mu_0)$. For every finite subset $\cF \subset \N$, we define the closed subspace $H_\cF \subset L^2(X,\mu)$ by
$$H_\cF = \overline{\operatorname{span}} \Bigl( \bigotimes_{i \in \cF} (L^2(X_0,\mu_0) \ominus \C 1) \Bigr) \subset L^2(X,\mu) \; .$$
Note that $H_\cF \perp H_{\cF'}$ when $\cF \neq \cF'$. For every $k \in \N$, we write
$$L_k = \bigoplus_{\cF : |\cF|=k} H_\cF$$
with the convention that $L_0 = \C 1$. It then follows from \eqref{eq.my-weak-convergence} that
$$U_{\vphi_n} \to \sum_{k=0}^\infty 2^{-k} P_{L_k} \quad\text{weakly in $B(L^2(X,\mu))$.}$$
Since $\psi$ commutes with all $\vphi_\al$, we get that $U_\psi$ commutes with all $U_{\vphi_n}$ and hence also with its weak limit. It follows that $U_\psi(L_1) = L_1$.

Choose a Haar unitary $v : X_0 \to \T$ that generates $L^\infty(X_0,\mu_0)$, meaning that $v : X_0 \to \T$ is a measure preserving isomorphism from $(X_0,\mu_0)$ onto the circle with the Lebesgue measure. Denote by $\pi_i : L^2(X_0,\mu_0) \to L^2(X,\mu)$ the embedding as functions only depending on the $i$-th coordinate. Fix $i \in \N$. Define $w : X \to \T$ by $w(x) = v((\psi^{-1}(x))_i)$. So, $w = U_\psi(\pi_i(v))$ and because $U_\psi(L_1) = L_1$, we can uniquely write
$$w = \sum_{j \in \N} \pi_j(v_j) \quad\text{with $v_j \in L^2(X_0,\mu_0) \ominus \C 1$ and $L^2$-convergence.}$$
Since $w^2 = U_\psi(\pi_i(v^2)) \in L_1$, it follows that for all $j \neq k$, $\pi_j(v_j) \pi_k(v_k) = 0$. Taking the $\|\,\cdot\,\|_2$-norm, we get that $\|v_j\|_2 \, \|v_k\|_2 = \|\pi_j(v_j) \pi_k(v_k)\|_2 = 0$ for all $j \neq k$. We conclude that at most one of the $v_j$ is nonzero. So there is a unique $j \in \N$ and $v_j \in L^2(X_0,\mu_0) \ominus \C 1$ such that $U_\psi(\pi_i(v)) = w = \pi_j(v_j)$. Since $v$ generates $L^\infty(X_0,\mu_0)$, we get that $U_\psi(\pi_i(L^\infty(X_0,\mu_0))) \subset \pi_j(L^\infty(X_0,\mu_0))$. Since a similar argument holds for $\psi^{-1}$, we find a permutation $\si \in S_\infty$ and a family $\be_i \in \Autmp(X_0,\mu_0)$ such that $U_\psi(\pi_i(v)) = \pi_{\si(i)}(U_{\be_i}(v))$ for all $i \in \N$ and $v \in L^\infty(X_0,\mu_0)$.

Expressing that $\psi$ commutes with all $\vphi_\al$, it follows that every $\be_i$ commutes with every $\al \in \Autmp(X_0,\mu_0)$. So, $\be_i = \id$ and we have proven that $\psi = \si$.

We still need to construct $\al_n \in \Autmp(X_0,\mu_0)$ satisfying \eqref{eq.my-weak-convergence}. We may realize $(X_0,\mu_0)$ as $(X_1 \times X_2,\mu_1 \times \mu_2)$, where $(X_1,\mu_1) = (\{0,1\}^\Z,\nu^\Z)$, $\nu(0)=\nu(1)=1/2$ and $(X_2,\mu_2)$ is a nonatomic standard probability space. Choose a mixing $\gamma \in \Autmp(X_2,\mu_2)$. Then define $\al_n \in \Autmp(X_1 \times X_2,\mu_1 \times \mu_2)$ by
$$\al_n(x,y) = (x',y') \quad\text{with}\quad \begin{cases} \text{$(x',y') = (x,y)$ if $x_n = 0$,}\\
\text{$x'_k = x_{2n-k}$ for all $k \in \Z$ and $y' = \gamma^n(y)$ if $x_n=1$.}\end{cases}$$
One easily checks \eqref{eq.my-weak-convergence}.
\end{proof}

We now repeat part of the proof of \cite[Proposition 8.11]{PV21} to prove Theorem \ref{thm.closed-subgroup-Sinfty} and use the opportunity to give a more self-contained argument.

\begin{proof}[{Proof of Theorem \ref{thm.closed-subgroup-Sinfty}}]
Fix a closed subgroup $G < S_\infty$. For every integer $n \geq 1$, denote by $I_{n,k} \subset \N^n$ an enumeration of the orbits of the diagonal action $G \actson \N^n$. Denote by $\lambda$ the Lebesgue measure on the circle $\T$ and define
$$(X,\mu) = (\T,\lambda)^\N \times \prod_{n,k} (\T,\lambda)^{I_{n,k}} \; .$$
Define the subgroup $\Lambda_0 < \Autmp(X,\mu)$ consisting of the diagonal transformations $\al^\N \times \id$, $\al \in \Autmp(\T,\lambda)$, acting in the first factor of $(X,\mu)$. Define $\Lambda_{n,k} < \Autmp(X,\mu)$ as the subgroup of all diagonal transformations $\al^{I_{n,k}}$, with $\al \in \Autmp(\T,\lambda)$, that only act in the factor $(\T,\lambda)^{I_{n,k}}$ and act as the identity elsewhere.

For every $x \in X$, we denote by $x_0 \in \T^\N$ and $x_{n,k} \in \T^{I_{n,k}}$ its natural coordinates. For every $n,k$ and $1 \leq j \leq n$, define the transformation $\theta_{n,k,j} \in \Autmp(X,\mu)$ by
\begin{multline*}
\theta_{n,k,j}(x_0,(x_{n',k'})_{n',k'}) = (x_0,(y_{n',k'})_{n',k'}) \\ \text{with}\quad (y_{n',k'})_i = \begin{cases} (x_0)_{i_j} \cdot (x_{n,k})_i &\;\;\text{if $(n',k') = (n,k)$,}\\
(x_{n',k'})_i &\;\;\text{if $(n',k') \neq (n,k)$.}\end{cases}
\end{multline*}
Define $\Lambda < \Autmp(X,\mu)$ as the subgroup generated by $\Lambda_0$, all $\Lambda_{n,k}$ and all $\theta_{n,k,j}$. We prove the centralizer of $\Lambda$ inside $\Autmp(X,\mu)$ is isomorphic with $G$.

Fix a $\psi \in \Autmp(X,\mu)$ that commutes with $\Lambda$. Since $\psi$ commutes with all $\Lambda_0$ and $\Lambda_{n,k}$, by Lemma \ref{lem.mutual-centralizer-Bernoulli}, we find permutations $\si_0$ of $\N$ and $\si_{n,k}$ of $I_{n,k}$ such that $\psi = \si_0 \times \prod_{n,k} \si_{n,k}$. Expressing that $\psi$ commutes with all $\theta_{n,k,j}$, we get that $(\si_{n,k}(i))_j = \si_0(i_j)$ for all $i \in I_{n,k}$ and $1 \leq j \leq n$. This means that $\si_0$ is a permutation of $\N$ with the property that the permutation $\si_0^n$ of $\N^n$ globally preserves all orbits of $G \actson \N^n$, for all $n \geq 1$, and that
$$\psi = \si_0 \times \prod_{n,k} \si_0^n|_{I_{n,k}} \; .$$
To conclude the proof of the theorem, it thus suffices to prove that such a permutation $\si_0$ must belong to $G$.

Take $i_1,\ldots,i_n \in \N$. Since $\si_0^n$ globally preserves the orbits of $G \actson \N^n$, there exists a $g \in G$ such that $\si_0^n(i) = g \cdot i$. This means that $\si_0(i_j) = g \cdot i_j$ for all $j \in \{1,\ldots,n\}$. We have proven that $\si_0$ lies in the closure of $G$ inside $S_\infty$. Since $G$ is a closed subgroup, it follows that $\si_0 \in G$.
\end{proof}

Lemma \ref{lem.mutual-centralizer-Bernoulli} also gives immediately the following result. Given a compact second countable group $K$, recall that the group $\Aut K$ of automorphisms of $K$ is a Polish group as a closed subgroup of the group of homeomorphisms of $K$.

\begin{proposition}\label{prop.Aut-K}
For every compact second countable group $K$, the automorphism group $\Aut K$ is a centralizer group.
\end{proposition}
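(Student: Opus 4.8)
The plan is to realize $\Aut K$ directly as the centralizer of an explicit subgroup of $\Autmp\bigl((K,m)^\N\bigr)$, where $m$ denotes the Haar probability measure on $K$, and then to invoke Theorem \ref{thm.main-centralizer-group.i}. If $K$ is finite, then $\Aut K$ is a finite, hence locally compact second countable group and we are done by Theorem \ref{thm.main-every-lc}; so assume $K$ is infinite, in which case $m$ is nonatomic. I then put $(X,\mu) = (K,m)^\N$ and view, as in Lemma \ref{lem.mutual-centralizer-Bernoulli}, the group $S_\infty$ acting by permuting the coordinates and $\Autmp(K,m)$ acting diagonally, both inside $\Aut(X,\mu)$. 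Writing $\al_\phi \in \Autmp(X,\mu)$ for the diagonal transformation $(\al_\phi(x))_n = \phi(x_n)$ associated with $\phi \in \Autmp(K,m)$, Lemma \ref{lem.mutual-centralizer-Bernoulli} tells us that any element of $\Aut(X,\mu)$ commuting with $S_\infty$ is of the form $\al_\phi$.

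The new ingredient is a single transformation that detects the group law of $K$ through a commutation relation. I would define the measure preserving shear $T \in \Autmp(X,\mu)$ by
$$T(x_1,x_2,x_3,x_4,\dots) = (x_1,\, x_1 x_2,\, x_3,\, x_4, \dots),$$
i.e.\ left translate the second coordinate by the first and fix all others; this is invertible with measure preserving inverse. A direct computation shows that $\al_\phi$ commutes with $T$ if and only if $\phi(x_1 x_2) = \phi(x_1)\phi(x_2)$ for a.e.\ $(x_1,x_2) \in K \times K$, since the only nontrivial matching is in the second coordinate. Thus, combining this with the previous paragraph, every element $\vphi \in \Autmp(X,\mu)$ centralizing the subgroup $\Lambda := \langle S_\infty, T\rangle$ must be of the form $\al_\phi$ with $\phi$ an a.e.\ multiplicative measure preserving bijection of $(K,m)$, and conversely every such $\al_\phi$ centralizes $\Lambda$.

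It then remains to identify the set of a.e.\ multiplicative elements of $\Autmp(K,m)$ with $\Aut K$. Here I would use that a measurable homomorphism between locally compact second countable groups is automatically continuous, so that an a.e.\ multiplicative $\phi$ agrees a.e.\ with a continuous endomorphism of $K$; its image is then a closed subgroup of full measure, hence all of $K$, and measure preservation forces trivial kernel, so $\phi$ is (a.e.\ equal to) a topological automorphism of $K$. Conversely every $\theta \in \Aut K$ preserves $m$ and is multiplicative. This yields $\{\phi \in \Autmp(K,m) \mid \phi \text{ a.e.\ multiplicative}\} = \Aut K$, which is moreover a closed subgroup of $\Autmp(K,m)$, and the topology it inherits agrees with the usual topology on $\Aut K$ (the inclusion $\Aut K \hookrightarrow \Autmp(K,m)$ is a continuous bijection between Polish groups onto a closed subgroup, hence a homeomorphism by the open mapping theorem). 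Consequently the centralizer of $\Lambda$ inside $\Autmp(X,\mu)$ equals $\{\al_\theta \mid \theta \in \Aut K\} \cong \Aut K$.

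Since $\Lambda = \langle S_\infty, T\rangle$ is a subgroup of $\Autmp(X,\mu)$ of a standard probability space and its centralizer is isomorphic with $\Aut K$, Theorem \ref{thm.main-centralizer-group.i} finishes the proof. I expect the only genuinely delicate point to be the identification in the last paragraph, and within it the direction showing the centralizer is \emph{not larger} than $\Aut K$: upgrading a.e.\ multiplicativity to a genuine continuous automorphism (via automatic continuity of measurable homomorphisms) and checking that the measure theoretic topology restricts to the native topology on $\Aut K$. The shear computation and the reduction to diagonal transformations via Lemma \ref{lem.mutual-centralizer-Bernoulli} are formal.
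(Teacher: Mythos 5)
Your proof is correct and follows essentially the same route as the paper: the Bernoulli product $(K,m)^\N$, the subgroup generated by $S_\infty$ together with a multiplication shear, Lemma \ref{lem.mutual-centralizer-Bernoulli} to reduce to diagonal transformations, and the rigidity of a.e.\ homomorphisms (the paper cites Zimmer) to identify the a.e.\ multiplicative elements of $\Autmp(K,m)$ with $\Aut K$. Your explicit treatment of the finite (atomic) case and the explicit appeal to Theorem \ref{thm.main-centralizer-group.i} to pass to a countable ergodic $\Lambda$ are small points of care that the paper leaves implicit.
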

\begin{proof}
Denote by $\lambda$ the Haar probability measure on $K$. Write $(X,\mu) = (K,\lambda)^\N$. Denote by $\Lambda < \Autmp(X,\mu)$ the subgroup generated by $S_\infty$, acting by permuting the coordinates, and $\theta \in \Autmp(X,\mu)$ given by
$$(\theta(x))_n = \begin{cases} x_1 x_2 &\;\;\text{if $n=1$,}\\
x_n &\;\;\text{if $n \neq 1$.}\end{cases}$$

We prove that the centralizer of $\Lambda$ inside $\Autmp(X,\mu)$ is isomorphic with $\Aut K$. Take $\psi \in \Autmp(X,\mu)$ that commutes with $\Lambda$. For every $\al \in \Autmp(K,\lambda)$ denote by $\vphi_\al \in \Autmp(X,\mu)$ its diagonal product. Since $\psi$ commutes with $S_\infty$ by Lemma \ref{lem.mutual-centralizer-Bernoulli}, we find $\al \in \Autmp(K,\Lambda)$ such that $\psi = \vphi_\al$. Since $\psi$ commutes with $\theta$, we get that $\al(xy) = \al(x) \al(y)$ for a.e.\ $(x,y) \in K \times K$. By \cite[Theorems B.2 and B.3]{Zim84}, there is a unique $\be \in \Aut K$ such that $\al(x) = \be(x)$ for a.e.\ $x \in K$. Since conversely every automorphism of $K$ preserves $\lambda$, we have identified the centralizer of $\Lambda$ with $\Aut K$.
\end{proof}


\begin{thebibliography}{CCM12}\setlength{\itemsep}{-1mm} \setlength{\parsep}{0mm} \small
\bibitem[Dep10]{Dep10} S. Deprez, Explicit examples of equivalence relations and II$_1$ factors with prescribed fundamental group and outer automorphism group. {\it Trans. Amer. Math. Soc.} {\bf 367} (2015), 6837-6876.

\bibitem[FV07]{FV07} S. Falgui\`{e}res and S. Vaes, Every compact group arises as the outer automorphism group of a II$_1$ factor. {\it J. Funct. Anal.} {\bf 254} (2008), 2317-2328.

\bibitem[IPP05]{IPP05} A. Ioana, J. Peterson and S. Popa, Amalgamated free products of weakly rigid factors and calculation of their symmetry groups. {\it Acta Math.} {\bf 200} (2008), 85-153.

\bibitem[JRD20]{JRD20} E. Janvresse, E. Roy and T. De La Rue, Dynamical systems of probabilistic origin: Gaussian and Poisson systems. In {\it Ergodic theory}, Encycl. Complex. Syst. Sci., Springer, New York, 2023. pp.\ 217-232.

\bibitem[PV06]{PV06} S. Popa and S. Vaes, Strong rigidity of generalized Bernoulli actions and computations of their symmetry groups. {\it Adv. Math.} {\bf 217} (2008), 833-872.

\bibitem[PV21]{PV21} S. Popa and S. Vaes, W$^*$-rigidity paradigms for embeddings of II$_1$ factors. {\it Commun. Math. Phys.} {\bf 395} (2022), 907-961.

\bibitem[Roy08]{Roy08} E. Roy, Poisson suspensions and infinite ergodic theory. {\it Ergodic Theory Dynam. Systems} {\bf 29} (2009), 667-683.

\bibitem[Sut85]{Sut85} C.E. Sutherland, A Borel parametrization of Polish groups. {\it Publ. Res. Inst. Math. Sci.} {\bf 21} (1985), 1067-1086.

\bibitem[Vae07]{Vae07} S. Vaes, Explicit computations of all finite index bimodules for a family of II$_1$ factors. {\it Ann. Sci. \'{E}c. Norm. Sup\'{e}r.} {\bf 41} (2008), 743-788.

\bibitem[VW24]{VW24} S. Vaes and L. Wouters, Borel fields and measured fields of Polish spaces, Banach spaces, von Neumann algebras and C$^*$-algebras. {\it Preprint.} \href{https://arxiv.org/abs/2405.16603}{arXiv:2405.16603}

\bibitem[Zim84]{Zim84} R.J. Zimmer, Ergodic theory and semisimple groups. {\it Monogr. Math.} {\bf 81}, Birkh\"{a}user Verlag, Basel, 1984.
\end{thebibliography}
\end{document}